\def\blfootnote{\gdef\@thefnmark{}\@footnotetext}
\renewcommand\mathcal{\mathscr}
\renewcommand{\emph}{\normalem}
\theoremstyle{plain}
\newtheorem{theorem}{Theorem}[section]
\newtheorem*{theorem*}{Theorem}
\newtheorem{lemma}[theorem]{Lemma}
\newtheorem*{lemma*}{Lemma}
\newtheorem{proposition}[theorem]{Proposition}
\theoremstyle{remark}
\newtheorem*{remark*}{Remark}
\theoremstyle{definition}
\newtheorem{definition}[theorem]{Definition}
\newtheorem*{definition*}{Definition}
\newtheorem{Basic assumptions}[theorem]{Basic assumptions}
\numberwithin{equation}{section}
\newcommand{\NN}{\mathbb{N}}
\newcommand{\CZ}{Calder\'on--Zygmund }
\newcommand{\jl}{j_{\ell}}
\newcommand{\di}{\,{\rm{d}}}
\newcommand{\opnorm}{\||}
\begin{document}

\title[Hardy spaces on weighted homogeneous trees]{Hardy spaces on weighted homogeneous trees}  


\keywords{Hardy spaces; homogeneous trees; exponential growth.}

\thanks{The second and third authors are members of the Gruppo Nazionale per l'Analisi Matematica, 
la Probabilit\`a e le loro Applicazioni (GNAMPA) of the Istituto 
Nazionale di Alta Matematica (INdAM).
}

\author[L. Arditti, A. Tabacco and M. Vallarino]
{Laura Arditti, Anita Tabacco amnd Maria Vallarino}

\address{Laura Arditti, Anita Tabacco, Maria Vallarino:
Dipartimento di Scienze Matematiche 
"Giuseppe Luigi Lagrange"
\\ Politecnico di Torino\\
corso Duca degli Abruzzi 24\\ 10129 Torino\\  Italy
\hfill\break
laura.arditti@polito.it, anita.tabacco@polito.it, 
maria.vallarino@polito.it}

\begin{abstract}
We consider an infinite homogeneous tree $\mathcal V$ endowed with the usual metric $d$ defined on graphs and a weighted measure $\mu$. The metric measure space $(\mathcal V,d,\mu)$ is nondoubling and of exponential growth, hence the classical theory of Hardy spaces does not apply in this setting. We construct an atomic Hardy space $H^1(\mu)$ on $(\mathcal V,d,\mu)$ and investigate some of its properties, focusing in particular on real interpolation properties and on boundedness of singular integrals on $H^1(\mu)$.  
\end{abstract}

\maketitle


\section{Introduction}
	Let $\mathcal V$ be an infinite homogeneous tree of order $q+1$ endowed with the usual distance $d$ defined on a graph (see Section \ref{notation} for the precise definitions). 
	Fix a doubly-infinite geodesic $g$ in $\mathcal V$ and define a mapping $N: g \rightarrow \mathbb{Z}$ such that 
	\begin{equation}
		\left| N(x) - N(y) \right|  = d(x,y) \qquad \forall x,y \in g \,.
	\end{equation}		
	We define the level function $\ell: \mathcal{V} \rightarrow \mathbb{Z}$ as
	\[
	\ell(x) = N(x') - d(x,x')\,,
	\]
	where $x'$ is the only vertex in $g$ such that $d(x,x') = \min\lbrace d(x,z): z\in g \rbrace$. Let $\mu$ be the measure on $\mathcal{V}$ defined by
	\begin{equation}
		\int_{\mathcal{V}} f \, d\mu = \sum_{x \in \mathcal{V}} f(x) q^{\ell(x)}
	\end{equation}
for every function $f$	 defined on $\mathcal V$. Then $\mu$ is a weighted counting measure. We shall show in Subsection \ref{SecMeasureSphere} that the space $(\mathcal V,d,\mu)$ is nondoubling and it is of exponential growth. In particular on such space the classical \CZ theory does not hold. 
 
Hebisch and Steger \cite{HS} developed a new \CZ theory which can be applied also to nondoubling metric measure spaces and showed that such a theory can be 
applied to the space $(\mathcal V,d,\mu)$.  In particular they proved that there exists a family of appropriate sets in $\mathcal V$, which are called \CZ sets, which replace the family of balls in the classical \CZ theory. We mention also that some properties of the space $(\mathcal V,d,\mu)$ were investigated in more detail in~\cite{Ar}.  
 
The purpose of this work is to develop a 
theory of Hardy spaces on $(\mathcal V,d,\mu)$, which is a natural 
development of the \CZ theory introduced in \cite{HS}. Following the classical atomic definition of Hardy spaces 
\cite{CW}, for each $p$ in $(1,\infty]$ we define an atomic Hardy space
$H^{1,p}(\mu)$.  Atoms are functions supported in \CZ sets, with 
vanishing integral and satisfying a certain size condition. 
We shall prove that all the spaces $H^{1,p}(\mu)$, $p\in (1, \infty]$, coincide and we simply denote by $H^1(\mu)$ this atomic Hardy space.  

We then find the real interpolation spaces between $H^1(\mu)$ and $L^q(\mu)$, $q\in (1,\infty]$. The interpolation results which 
we prove are the analogue of the classical interpolation results (see \cite{Ha,J,P,RS}), but the proofs are different. 
Indeed, in the classical setting the maximal characterization 
of the Hardy space is used to obtain the interpolation results, 
while the Hardy space $H^1(\mu)$ introduced in this paper has an atomic definition.

Further, we show that a singular integral operator whose kernel satisfies an 
integral H\"ormander condition, extends to a bounded
operator from $H^1(\mu)$ to $L^1(\mu)$.  
As a consequence of this result, we show that spectral multipliers
of a distinguished Laplacian $\mathcal L$ and the first order Riesz transform associated to $\mathcal L$ extend to bounded operators from $H^1(\mu)$ to
$L^1(\mu)$.

It would be also interesting to characterize the dual space of $H^1(\mu)$ and to obtain complex interpolation results involving $H^1(\mu)$, its dual and the $L^q(\mu)$-spaces. This will be the object of further investigations.

\bigskip

All the results described above may be considered as an analogue of the classical theory of Hardy spaces.

The classical Hardy space \cite{CW, FS, S} was introduced in $(\mathbb R^n,d,m)$, where $d$ is the 
Euclidean metric and $m$ denotes the Lebesgue measure and more generally on a 
space of homogeneous type, i.e. a metric measure space $(X,d,\mu)$ where the 
doubling condition is satisfied, i.e., there exists
a constant $C$ such that
\begin{equation}\label{doubling}
\mu\bigl(B(x,2r)\bigr)
\leq C\, \mu\bigl(B(x,r)\bigr)
\qquad\forall x \in X\,, \qquad\forall r >0.
\end{equation}

Extensions of the theory of Hardy spaces have been considered 
in the literature on various metric measure spaces which do not satisfy the doubling condition (\ref{doubling}). The literature on this subject is huge and we shall only cite here some 
contributions \cite{ CMM, CM, MOV, V} which are strictly related to our work.


In particular, we mention that Celotto and Meda \cite{CM} studied various Hardy spaces on a homogeneous tree $\mathcal V$ endowed with the metric $d$ and the counting measure, which is not the measure $\mu$ that we consider here. Their theory is useful to study the boundedness of singular integral operators related to the standard Laplacian defined on trees which is self-adjoint with respect to the counting measure and not to the measure $\mu$. The theory we develop here instead is useful to study singular integral operators related to a distinguished Laplacian self-adjoint on $L^2(\mu)$ (see Subsection \ref{SubSecSingInt}). 

We mention that in \cite{MOV, V} the authors used the \CZ theory introduced by Hebisch and Steger in \cite{HS} to construct Hardy spaces on some solvable Lie groups of exponential growth and studied their properties. Our work can be thought as a counterpart in a discrete setting of the results in \cite{V}, and some of our proofs are strongly inspired by it. 

 \bigskip

 Positive constants are denoted by $C$; these may differ from one line to another, and may depend on any quantifiers written, implicitly or explicitly, before the relevant formula.

	\section{Weighted homogeneous trees} \label{notation}
	In this section we introduce the infinite homogeneous tree and we define a distance $d$ and and a measure $\mu$ on it. We show that the corresponding metric measure space $(\mathcal{V}, d, \mu)$ does not satisfy the doubling property. We then introduce a family of sets, called trapezoids, which will be fundamental in the construction of Hardy spaces.
	\begin{definition}
		An infinite homogeneous tree of order $q+1$ is a graph $T=(\mathcal{V},\mathcal{E})$, where $\mathcal V$ denotes the set of vertices and $\mathcal E$ denotes the set of edges, with the following properties:
		\begin{enumerate}
			\item[(i)] $T$ is connected and acyclic;
			\item[(ii)] each vertex has exactly $q+1$ neighbours.
		\end{enumerate}
	\end{definition}	 
	
	
	On $\mathcal{V}$  we can define the distance $d(x,y)$ between two vertices $x$ and $y$ as the length of the shortest path between $x$ and $y$.
	We also fix a doubly-infinite geodesic $g$ in $T$, that is a connected subset $g \subset \mathcal{V}$ such that
	\begin{enumerate}
		\item[(i)] for each element $v \in g$ there are exactly two neighbours of $v$ in $g$;
		\item[(ii)] for every couple $(u,v)$ of elements in $g$, the shortest path joining $u$ and $v$ is contained in $g$.
	\end{enumerate}
	We define a mapping $N: g \rightarrow \mathbb{Z}$ such that 
	\begin{equation}
		\left| N(x) - N(y) \right|  = d(x,y) \qquad \forall x,y \in g \,.
	\end{equation}		
This corresponds to the choice of an origin $o \in g$ (the only vertex for which $N(o)=0$) and an orientation for $g$; in this way we obtain a numeration of the vertices in $g$.
	We define the level function $\ell: \mathcal{V} \rightarrow \mathbb{Z}$ as
	\[
	\ell(x) = N(x') - d(x,x')
	\]
	where $x'$ is the only vertex in $g$ such that $d(x,x') = \min\lbrace d(x,z): z\in g \rbrace$.
	For $x,y \in \mathcal{V}$ we say that $y$ \textit{lies above} $x$ if
	\[
	\ell(x) = \ell(y) - d(x,y).
	\]
	In this case we also say that $x$ \textit{lies below} $y$.
	
	\begin{figure}[!tbp]
		\label{fig:pic1}
		\begin{center}		\includegraphics[width=0.9\linewidth]{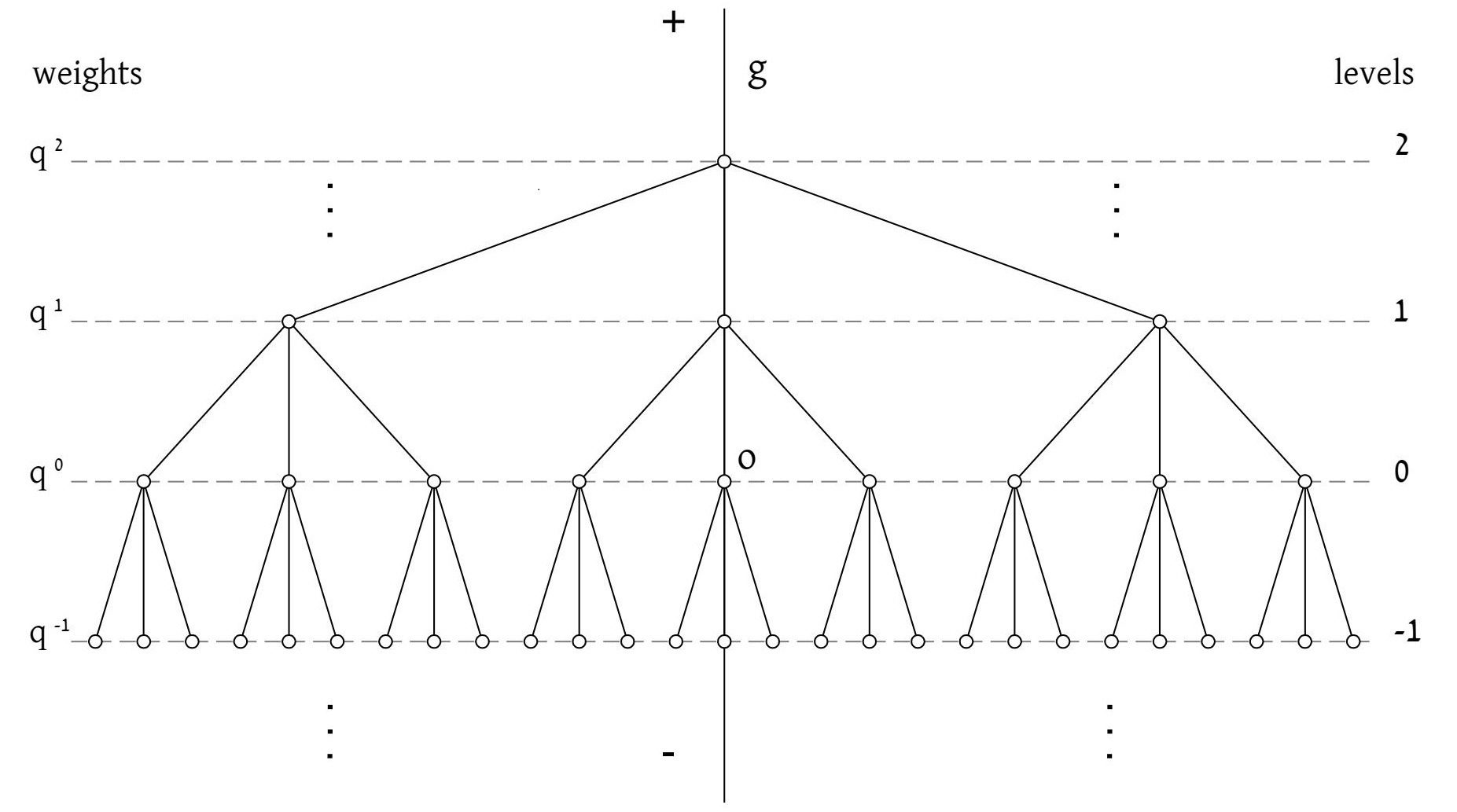}
			\caption{Representation of the measure $\mu$ ($q=3$)}
		\end{center}
	\end{figure}

	Let $\mu$ be the measure on $\mathcal{V}$ such that for each function $f: \mathcal{V}\rightarrow\mathbb C$
	\begin{equation}
		\int_{\mathcal{V}} f \, d\mu = \sum_{x \in \mathcal{V}} f(x) q^{\ell(x)}.
	\end{equation}
	Then $\mu$ is a weighted counting measure such that the weight of a vertex depends only on its level and the weight associated to a certain level is given by $q$ times the weight of the level immediately underneath (see Figure 1).
	
	\subsection{Doubling and local doubling properties} \label{SecMeasureSphere}
	Observe that the space $(\mathcal{V},d,\mu)$ exhibits exponential volume growth. Indeed given $x_0\in\mathcal V$ and $r\geq 1$ consider the sphere $	S_r(x_0) = \left\lbrace x \in \mathcal{V} : d(x,x_0) = r \right\rbrace $	and the closed ball $B_r(x_0) = \left\lbrace x \in \mathcal{V} : d(x,x_0) \leq r \right\rbrace.$ A direct computation shows that for $r\geq 1$  their measures are given by:	
	\[
		\mu(S_r(x_0)) = q^{\ell(x_0)+r-1}(1+q) 
		\qquad \textrm{and} \qquad
		\mu(B_r(x_0)) =q^{\ell(x_0)} \, \frac{q^{r+1}+q^r-2}{q-1}.
	\]
We notice that they depend on the level of the center $x_0$ and grow exponentially with respect to the radius $r$. As a consequence we can prove the following.
	
	\begin{proposition}
		The space $(\mathcal V,d,\mu)$ is not doubling but it is locally doubling.
	\end{proposition}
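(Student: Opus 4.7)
The plan is to read both halves of the statement directly off the explicit volume formula recorded just above the proposition,
$$\mu(B_r(x_0)) = q^{\ell(x_0)} \, \frac{q^{r+1}+q^r-2}{q-1} \qquad (r\ge 1).$$
The key observation I would record first is that the $x_0$-dependence sits entirely in the prefactor $q^{\ell(x_0)}$, which therefore cancels in any doubling quotient $\mu(B_{2r}(x_0))/\mu(B_r(x_0))$. Hence the doubling question at scale $r$ reduces to a single-variable inequality in $r$ with the same content at every centre.

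For the non-doubling part, I would fix any $x_0\in\mathcal V$ and compute
$$\frac{\mu(B_{2r}(x_0))}{\mu(B_r(x_0))} = \frac{q^{2r+1}+q^{2r}-2}{q^{r+1}+q^r-2},$$
which, by factoring out $q^r$ from numerator and denominator, behaves like $q^r$ as $r \to \infty$. Evaluating along $r=n\to\infty$ produces a sequence of radii along which the doubling ratio blows up, ruling out the existence of any uniform constant $C$ in the doubling inequality.

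For local doubling, I would fix any $R\ge 1$. For integer $r$ with $1\le r\le R$, the same ratio is an explicit rational function of $q^r$, hence bounded by a constant $C_R$ depending only on $q$ and $R$. Since the graph distance is integer-valued, for any real $r\in[1,R]$ the balls $B_r(x_0)$ and $B_{2r}(x_0)$ coincide with $B_{\lfloor r\rfloor}(x_0)$ and $B_{\lfloor 2r\rfloor}(x_0)$, so the integer estimate takes care of this range; and for $0<r<1$ both balls reduce to $\{x_0\}$ and the ratio equals $1$. Together these yield the local doubling property.

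I do not expect any serious obstacle: the whole statement rests on the $x_0$-independence of the doubling ratio, after which everything reduces to elementary estimation of an explicit function of $q^r$.
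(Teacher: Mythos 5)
Your proposal is correct and follows essentially the same approach as the paper: read both halves off the explicit volume formula, note that the $q^{\ell(x_0)}$ prefactor cancels in the doubling ratio, send $r\to\infty$ to defeat any uniform constant, and bound the ratio on $r\le R$. The only cosmetic difference is in the local-doubling half, where you invoke finiteness of integer radii in $[1,R]$ and handle $r<1$ explicitly, whereas the paper bounds the ratio by its value at $r=R$ (implicitly relying on monotonicity in $r$); both routes are fine.
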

	
	\begin{proof}
		Fix $x_0 \in \mathcal{V}$ and notice that
		\begin{align*}
		\lim_{r\rightarrow\infty} \frac{\mu(B_{2r}(x_0))}{\mu(B_r(x_0))} = \lim_{r\rightarrow\infty} \frac{q^{\ell(x_0)} \,\frac{q^{2r+1}+q^{2r}-2}{q-1}}{q^{\ell(x_0)} \,\frac{q^{r+1}+q^r-2}{q-1}}
		= \lim_{r\rightarrow\infty} q^r = +\infty.
		\end{align*}
		Thus the doubling property (\ref{doubling}) fails.
		
		Instead, we show that $(\mathcal V,d,\mu)$ is locally doubling. Indeed, fix $x_0 \in \mathcal{V}$ and $R>0$ and consider $r \leq R$; one has
		\begin{align*}
		\mu(B_{2r}(x_0)) &= q^{\ell(x_0)} \, \frac{q^{2r+1}+q^{2r}-2}{q-1}
		\leq q^{\ell(x_0)} \frac{\frac{q^{2R+1}+q^{2R}-2}{q-1}}{\frac{q^{R+1}+q^{R}-2}{q-1}} \frac{q^{r+1}+q^{r}-2}{q-1} \\
		&= C_R\mu(B_{r}(x_0)) 
		\end{align*}
		with $C_R = \frac{q^{2R+1}+q^{2R}-2}{q^{R+1}+q^{R}-2} >0 $ independent of $x_0$ and $r$.
	\end{proof}

	\subsection{Admissible trapezoids and Calder\'on--Zygmund sets}
	In this subsection we introduce the notion of trapezoid and recall the definition and the main properties of the admissible trapezoids introduced in \cite{HS}.
	
	\begin{definition}
		We call trapezoid a set of vertices $S \subset \mathcal V$ for which there exist $x_{S} \in \mathcal V$ and $a,b \in \mathbb R_+$ such that
		\begin{equation}
		S= \left\lbrace x \in \mathcal{V} : x \text{ lies below } x_S \,, a \leq \ell(x_S)-\ell(x) < b \right\rbrace.
		\end{equation}
	\end{definition}
	In the following we will refer to $x_S$ as the root node of the trapezoid.
	Among all trapezoids we are mostly interested in those where $a$ and $b$ are related by particular conditions, as specified in the following definitions.
	
	\begin{definition} \label{DefAdmissibleTrapezoid}
		A trapezoid $R \subset \mathcal{V}$ is an admissible trapezoid if and only if one of the following occurs:
		\begin{enumerate}
			\item[(i)] $R=\left\lbrace x_R \right\rbrace$ with $x_R \in \mathcal{V}$, that is $R$ consists of a single vertex ;
			\item[(ii)] $\exists x_R \in \mathcal{V}, \, \exists h \in \mathbb{N}^+$ such that
			\[
			R= \left\lbrace x \in \mathcal{V} : x \text{ lies below } x_R \,, h \leq \ell(x_R)-\ell(x) < 2h \right\rbrace.
			\]
		\end{enumerate}
	\end{definition}
	We set $h(R)=1$ in the first case and $h(R)=h$ in the second case. In both cases $h(R)$ can be interpreted as the height of the admissible trapezoid, which coincides with the number of levels spanned by $R$ (see Figure 2).
	
	\begin{definition}
		We call width of the admissible trapezoid $R$ the quantity
		\[
		w(R) = q^{\ell(x_R)}.
		\]
	\end{definition}
	
	We have that:
	\begin{equation} \label{PropMeasureOfTrapezoid}
		\mu(R) = h(R)q^{\ell(x_R)} = h(R)w(R).
	\end{equation}
	
	\begin{figure}[!btp]
		\begin{center}		\includegraphics[width=1.1\linewidth]{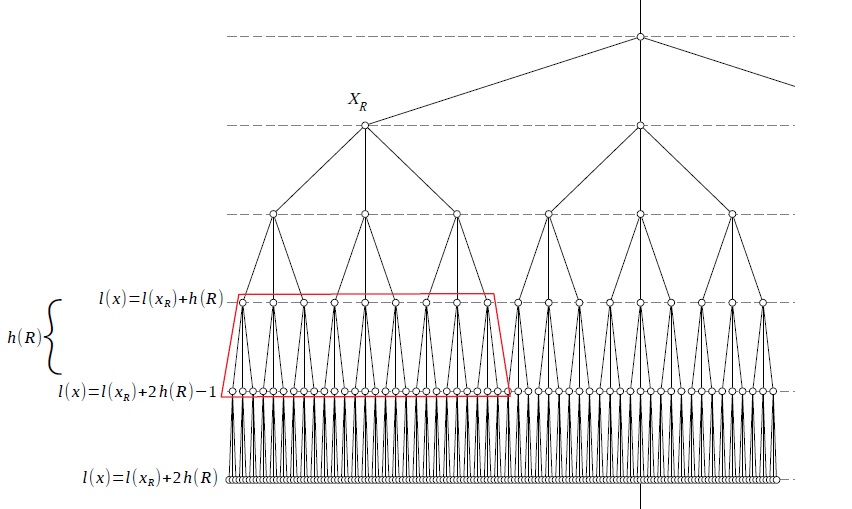} 
			\label{fig:pic2}
			\caption{Representation of an admissible trapezoid with $h(R) = 2$ ($q=3$)}
		\end{center}
	\end{figure}

We now introduce the family of Calder\'on--Zygmund sets. They are trapezoids, even if not of admissible type; they consist of suitable enlargements of admissible trapezoids, constructed according to the following definition.

\begin{definition}
	Given an admissible trapezoid $R$, the envelope of $R$ is the set
	\begin{equation}
	\tilde{R}= \left\lbrace x \in \mathcal{V} : x \text{ lies below } x_R \,, \frac{h}{2} \leq \ell(x_R)-\ell(x) < 4h \right\rbrace
	\end{equation}
	and we set $h(\tilde{R})=h(R)$. The envelope of an admissible trapezoid is also called a Calder\'on--Zygmund set.
\end{definition}
	
%
%
%
%

	\begin{proposition}\label{mutildeR}
		Let $R$ be an admissible trapezoid. Then:
		\begin{equation}
		\mu(\tilde{R}) \leq 4\mu(R).
		\end{equation}
	\end{proposition}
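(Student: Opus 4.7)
The plan is to compute $\mu(R)$ and $\mu(\tilde R)$ directly by decomposing each set into horizontal layers of vertices lying at a fixed distance below $x_R$, and to observe that every such layer carries the same $\mu$-measure. Concretely, I would first establish the following geometric identity: for each integer $k\geq 1$, the set of vertices lying below $x_R$ at distance exactly $k$ from $x_R$ has cardinality $q^k$, because each vertex of the tree has one neighbour above and $q$ neighbours below. Every vertex in this layer has level $\ell(x_R)-k$, hence $\mu$-weight $q^{\ell(x_R)-k}$, so the total measure of the layer equals $q^k\cdot q^{\ell(x_R)-k}=q^{\ell(x_R)}=w(R)$, independent of $k$.

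With this slicing identity, any strip $\{x:x\text{ lies below }x_R,\ a\leq \ell(x_R)-\ell(x)<b\}$ with $a,b$ positive integers has measure $(b-a)w(R)$; applied to $R$ itself this already gives formula~(\ref{PropMeasureOfTrapezoid}). For case (ii) of Definition~\ref{DefAdmissibleTrapezoid}, the envelope $\tilde R$ is the union of the layers at integer depths $k$ with $h/2\leq k<4h$, and the number of such integers is at most $4h-h/2=7h/2$. Therefore
\[
\mu(\tilde R)\;\leq\;\tfrac{7h}{2}\,w(R)\;=\;\tfrac{7}{2}\,\mu(R)\;\leq\;4\,\mu(R).
\]

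The degenerate case (i), where $R=\{x_R\}$ and $h(R)=1$, is entirely analogous: $\mu(R)=q^{\ell(x_R)}$, while $\tilde R$ consists exactly of the descendants of $x_R$ at depths $1,2,3$, giving $\mu(\tilde R)=3q^{\ell(x_R)}=3\mu(R)\leq 4\mu(R)$.

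There is no real obstacle here: the exponential growth of the number of descendants at each level is exactly cancelled by the exponential decay $q^{\ell(x)}$ of the vertex weight with the level, which is precisely why $\mu$ is the natural measure in this setting. Once the slicing identity is in hand, the whole argument reduces to counting integers in an interval.
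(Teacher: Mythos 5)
Your argument is essentially the paper's: you slice $\tilde R$ into horizontal layers, observe that the exponential count $q^k$ of descendants at depth $k$ exactly cancels the weight $q^{\ell(x_R)-k}$ so that each layer contributes precisely $w(R)=q^{\ell(x_R)}$, and then bound the number of layers in $[h/2,4h)$ by $7h/2\leq 4h$. This is exactly the computation in the paper's proof for the nondegenerate case.

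One small point deserves attention: in the degenerate case you apply the envelope formula literally with $h=1$ and conclude that $\tilde R$ consists of the descendants at depths $1,2,3$, so $\mu(\tilde R)=3\mu(R)$. The paper instead adopts the convention that $\tilde R=R=\{x_R\}$ when $R$ is a single vertex (so $\mu(\tilde R)=\mu(R)$). Your reading would give $R\not\subset\tilde R$, since $x_R$ is at depth $0<1/2$; that is harmless for the present inequality, which holds either way, but it is incompatible with the role of $\tilde R$ as an \emph{envelope} of $R$ (used, e.g., in Proposition~\ref{inclusion} to conclude $R_2\subset\tilde R_1$). So for consistency with the rest of the paper you should take $\tilde R=\{x_R\}$ in the degenerate case, as the paper's proof implicitly does; the conclusion $\mu(\tilde R)\leq 4\mu(R)$ is then trivial there.
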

	
	\begin{proof}
		In the degenerate case one has $R = \lbrace x_R \rbrace = \tilde{R}$ and then $\mu(\tilde{R})=\mu(R)$.
		In the nondegenerate case
		\begin{align*}
		\mu(\tilde{R}) &= \sum_{\ell = \ell(x_R)-4h+1}^{\lfloor \ell(x_R) -\tfrac{h}{2} \rfloor} \sum_{x \in \tilde{R} : \ell(x)=\ell} q^\ell = \sum_{\ell = \ell(x_R)-4h+1}^{\lfloor \ell(x_R) -\tfrac{h}{2} \rfloor} q^\ell q^{\ell(x_R) - \ell} \\
		&\leq q^{\ell(x_R)} \left( \ell(x_R) - \tfrac{h}{2} - \ell(x_R) + 4h \right) \\
		& \leq 4\mu(R) \,,
		\end{align*}
		which concludes the proof.
	\end{proof}
	
	\begin{proposition}\label{inclusion}
		
		Let $R_1$ and $R_2$ be two admissible trapezoids.
		If
		\[
		R_1 \cap R_2 \neq \emptyset \text{   and   } w(R_1)\geq w(R_2) \,,
		\]
		then
		\[
		R_2 \subset \tilde{R_1}.
		\]
	\end{proposition}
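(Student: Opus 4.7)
The argument is essentially combinatorial and reduces to inequalities on levels along the vertical rays of the tree. The geometric engine, which I would record first, is the following observation: in the oriented tree each vertex has exactly one neighbour of level strictly greater and $q$ neighbours of level strictly smaller (immediate from the definition of $\ell$ and the fact that the closest point of $g$ to a neighbour is the same as for the original vertex, or one step closer). Consequently, for every vertex $y$ the set of vertices lying above $y$ is a single upward geodesic ray on which $\ell$ increases by one at each step.

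Apply this to the intersection point. Pick $y \in R_1 \cap R_2$; then $x_{R_1}$ and $x_{R_2}$ both lie on the upward ray from $y$. The hypothesis $w(R_1) \geq w(R_2)$ says exactly $\ell(x_{R_1}) \geq \ell(x_{R_2})$, so $x_{R_2}$ sits on the geodesic segment from $y$ to $x_{R_1}$; in particular $x_{R_2}$ lies below $x_{R_1}$, and by the same ray argument every $z \in R_2$ (which lies below $x_{R_2}$) also lies below $x_{R_1}$. Set $k := \ell(x_{R_1}) - \ell(x_{R_2}) \geq 0$.

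What remains is bookkeeping. Writing $h_i := h(R_i)$ and setting $m' := \ell(x_{R_2}) - \ell(y)$ and $m := \ell(x_{R_2}) - \ell(z)$, both lying in $[h_2, 2h_2)$, the membership $y \in R_1$ forces $k + m' \in [h_1, 2h_1)$, which translates (for integer values) to $h_1 - 2h_2 + 1 \leq k \leq 2h_1 - h_2 - 1$; in particular $h_2 \leq 2h_1 - 1$. Since $\ell(x_{R_1}) - \ell(z) = k + m$, I would then verify the two defining inequalities of $\tilde{R_1}$: for the upper bound, $k + m \leq (2h_1 - h_2 - 1) + (2h_2 - 1) = 2h_1 + h_2 - 2 < 4h_1$; for the lower bound, $k + m \geq k + h_2$.

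The only delicate step is the lower bound $k + h_2 \geq h_1/2$. Neither of the two obvious estimates $k + h_2 \geq h_2$ and $k + h_2 \geq h_1 - h_2 + 1$ works on its own in every regime, but the maximum of the two does: $\max\{h_2,\, h_1 - h_2 + 1\} \geq \tfrac{1}{2}\bigl(h_2 + (h_1 - h_2 + 1)\bigr) = (h_1+1)/2 > h_1/2$. This is the place where I expect the calculation to slow the reader down, and it is precisely where the factor $1/2$ in the definition of the envelope is forced. The degenerate cases in which $R_1$ or $R_2$ consists of a single vertex are handled separately by direct inspection, using the convention that $\tilde{R_1}$ contains the root $x_{R_1}$.
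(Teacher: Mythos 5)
Your proof is correct and follows essentially the same route as the paper's: pick a common point, use it to bound $k=\ell(x_{R_1})-\ell(x_{R_2})$ between $h_1-2h_2+1$ and $2h_1-h_2-1$, and then verify the two defining inequalities of $\tilde{R}_1$. Your bookkeeping is in fact a touch cleaner than the paper's (no spurious factor of $2$ in the upper bound, and the ``max of the two is at least the average'' step makes explicit what the paper buries in the combination $\tfrac12[h_1-2h_2+1]+h_2$), and your treatment of degenerate trapezoids agrees with the convention implicit in the paper.
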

	\begin{proof}
	The only nontrivial case is when neither $R_1$ nor $R_2$ is composed of a single vertex.
	Let $x_{R_1}$ and $x_{R_2}$ be the two root nodes of $R_1$ and $R_2$, respectively. Then
	\[
	w(R_1) = q^{\ell(x_{R_1})} \geq q^{\ell(x_{R_2})} = w(R_2) \quad \implies \quad \ell(x_{R_1}) \geq \ell(x_{R_2}).
	\]
	Moreover, since $R_1 \cap R_2 \neq \emptyset$,  $x_{R_2}$ is below $x_{R_1}$ and so is every vertex of $R_2$. In the following we denote $h_1 = h(R_1)$ and $h_2 = h(R_2)$.
	Let $\hat{x} \in R_1 \cap R_2 \neq \emptyset$. Then we obtain the following constraints:
	\[ 
	\begin{cases}
	\ell(x_{R_2}) -2h_2 +1 \leq \ell(\hat{x}) \leq \ell(x_{R_1}) - h_1 \\
	\ell(x_{R_1}) -2h_1 +1 \leq \ell(\hat{x}) \leq \ell(x_{R_2}) - h_2
	\end{cases}
	\implies
	\begin{cases}
	\ell(x_{R_1})-\ell(x_{R_2}) \geq h_1 - 2h_2 + 1\\
	\ell(x_{R_1})-\ell(x_{R_2})  \leq 2h_1 - h_2 -1 \,.
	\end{cases}
	\]
	Let $x \in R_2$; then $x$ lies below $x_{R_1}$. Moreover
	
	\begin{align*}
	\ell(x_{R_1}) - \ell(x) &= \left[ \ell(x_{R_1}) - \ell(x_{R_2}) \right] + \left[ \ell(x_{R_2}) - \ell(x) \right] \\
	& \leq 2\left[ 2h_1 - h_2 - 1 \right] + \left[ 2h_2 - 1 \right] 
	< 4h_1.
	\end{align*}
	\begin{align*}
	\ell(x_{R_1}) - \ell(x) &= \left[ \ell(x_{R_1}) - \ell(x_{R_2}) \right] + \left[ \ell(x_{R_2}) - \ell(x) \right] \\
	& \geq \frac{1}{2} \left[ h_1 - 2h_2 + 1\right] + \left[ h_2 \right]
	> \frac{h_1}{2}.
	\end{align*}
	So $\frac{h_1}{2} \leq \ell(x_{R_1}) - \ell(x) < 4h_1$, $\forall x \in R_2$ and this shows that $R_2 \subset \tilde{R_1}$.
	\end{proof}
	
	\begin{proposition}\label{CZinclusion}
		Given a Calder\'on--Zygmund set $\tilde{R}$, we have that for all $z \in \tilde{R}$
		\[\tilde{R} \subset B(z, 8h(\tilde{R})) \,.\] 
	\end{proposition}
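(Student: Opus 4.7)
The plan is to bound $d(z,x)$ for arbitrary $z,x\in\tilde R$ via the triangle inequality through the root node $x_R$. The key observation is that every vertex of $\tilde R$ lies below $x_R$, so the distance from such a vertex to $x_R$ can be read off directly from the level function.

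First I would recall that if $x$ lies below $y$ in the sense defined above, then $\ell(x)=\ell(y)-d(x,y)$, i.e.\ $d(x,y)=\ell(y)-\ell(x)$. Now by the definition of the envelope, every $x\in\tilde R$ lies below $x_R$ and satisfies $\ell(x_R)-\ell(x)<4h$, where $h=h(\tilde R)$. Hence $d(x,x_R)<4h$ for every $x\in\tilde R$. Applied simultaneously to $z$ and to any other $x\in\tilde R$, this gives
\[
d(z,x)\le d(z,x_R)+d(x_R,x)<4h+4h=8h=8h(\tilde R),
\]
so $x\in B(z,8h(\tilde R))$. Since $x$ was arbitrary, $\tilde R\subset B(z,8h(\tilde R))$.

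The only thing to watch is the degenerate case $R=\{x_R\}$ with $h(R)=1$, in which $\tilde R=R$; here the inclusion is trivial since $\tilde R=\{z\}$. Otherwise the argument is a direct application of the triangle inequality, and there is no substantive obstacle. No auxiliary result beyond the fact that the shortest path between two vertices lying below a common ancestor passes through that ancestor (which is already encoded in $d(x,x_R)=\ell(x_R)-\ell(x)$) is needed.
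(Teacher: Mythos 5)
Your proof is correct and follows essentially the same route as the paper's: both go from $z$ to the root node $x_R$ and then down to the other vertex, using that each leg has length $\ell(x_R)-\ell(\cdot)<4h$. One minor caveat: the parenthetical claim that ``the shortest path between two vertices lying below a common ancestor passes through that ancestor'' is not literally true (the two vertices could lie on the same branch, one below the other), but your argument never actually uses it --- only the triangle inequality together with $d(x,x_R)=\ell(x_R)-\ell(x)$ is needed, so the proof stands.
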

	\begin{proof}
		Fix a point $z\in \tilde{R}$. Every vertex $y \in \tilde{R}$ has distance $d(z,y) \leq 8h(\tilde{R})-2 < 8h(\tilde{R})$. Indeed, starting from $z$ it is possible to reach $y$ passing through at most $\left[ (4h(\tilde{R})-1) \right] 2  = 8h(\tilde{R})-2$ edges, moving from $z$ to the root node of the trapezoid and then from the root node to $y$.
	\end{proof}
	
	\begin{definition}\label{tildeQ*}
		Given a Calder\'on--Zygmund set $\tilde{R}$, we define the set
		\begin{equation}
			\tilde{R}^* = \left\lbrace x \in \mathcal{V} : d(x,\tilde{R}) < h({\tilde{R})}/4 \right\rbrace\,.
		\end{equation}
			\end{definition}
It is easy to see that there exists a positive constant $C$ such that for every Calder\'on--Zygmund set $\tilde{R}$
\begin{equation}\label{mutildeQ*}
\mu(\tilde{R}^*)\leq C\mu(\tilde{R})\,.
\end{equation}	
See \cite[p.75]{Ar} for a proof of this fact. 
	
	\section{The maximal function}\label{SecMaxFunction}
	In this section we define two maximal functions and describe a way to construct a covering of their level sets which will be useful in the sequel.
	
	\begin{definition}
		
		Given $f: \mathcal V \rightarrow \mathbb{C}$, we define the maximal function $M$ as
		\[
		Mf(x) = 
		\sup_{R: x \in R} \frac{1}{\mu(R)} \int_{R} |f| \, d\mu
		\]
		where the supremum is taken over all admissible trapezoids $R$ containing $x$.
	\end{definition}
	
	Consider a function $f \in L^p(\mu)$ and let $\lambda >0$. We are interested in constructing a covering of the level set 
	\[
	\Omega_{\lambda^p} = \lbrace x \in \mathcal{V} : M(|f|^p)(x) > \lambda^p \rbrace \,.
	\]
	Define $S_0$ as the family of all admissible trapezoids $R$ such that 
	\[
	\int_{R} |f|^p \, d\mu \geq \lambda^p \mu(R).
	\]
	Since $S_0$ is countable, we can introduce an ordering in $S_0$.
	All trapezoids in $S_0$ have bounded measure and bounded width, because $\forall R \in S_0$ we have
		\[
		w(R) = \frac{\mu(R)}{h(R)} \leq \mu(R) \leq \frac{1}{\lambda^p} \left\| f \right\|_{L^p}^p\,.
		\]
	So it is possible to choose in $S_0$ a trapezoid $R_0$ of largest width (in case of ties, we choose that trapezoid of largest width which occurs earliest in the ordering).
	Then we proceed inductively:
	\begin{enumerate}
		\item[(i)] $S_{i+1}$ is the family of all admissible trapezoids $R \in S_i$ disjoint from $R_0,\dots,R_i$;
		\item[(ii)] $R_{i+1}$ is the trapezoid of largest width in $S_{i+1}$ which occurs earliest in the ordering.
	\end{enumerate}
		
		Let $R \in S_0$. Then by construction $R$ intersects some $R_i$ with $w(R_i) \geq w(R)$.
		
		Indeed, there exists a number $j \in \lbrace 0,1,2,\dots \rbrace$ such that $R \in S_j$ and $R \notin S_{j+1}$, i.e. in the previous construction there exists a step $j$ in which one of the following occurs:
		\begin{enumerate}
			\item either $R$ is the trapezoid of largest width that occurs earliest in the ordering, and then $R$ is selected and $R_j = R$, so that $R \cap R_i \neq \emptyset$ for $i=j$;
			\item or $R$ is not the trapezoid of largest width that occurs earliest in the ordering and it intersects $R_j$. Then $R$ is not in $S_i \, \, \forall i \geq j+1$ and $R \cap R_i \neq \emptyset$ for $i=j$.
		\end{enumerate}
		To ensure that there is some $j$ with the stated property it is sufficient to avoid that $S_0$ can contain an infinite number of trapezoids with the same width that do not intersect each other. This possibility is excluded observing that:
		\begin{align*}
		\sum_{i} \mu(R_i) &\leq \frac{1}{\lambda^p} \sum_{i}\int_{R_i} |f|^p \, d\mu \leq \frac{1}{\lambda^p}\int_{\mathcal{V}} |f|^p \, d\mu = \frac{1}{\lambda^p} \left\| f \right\|_{L^p} < \infty \,,
		\end{align*}
		while if there was among the $R_i$'s an infinite number of trapezoids with constant width $w$ we would have
		\[
		\sum_{i} \mu(R_i) \geq \sum_{n=1}^{\infty}w = \infty.
		\]
		In conclusion, 
		\[
		\forall R \in S_0, \quad \exists i : R \cap R_i \neq \emptyset \text{ and } w(R_i) \geq w(R).
		\]
		By Proposition \ref{inclusion}, this implies $R \subset \tilde{R_i}$.
		We set $E := \bigcup_i \tilde{R_i}$. We have that $\Omega_{\lambda^p} \subset E $ and
		\begin{equation}
		\mu(E) \leq \sum_{i}\mu(\tilde{R_i}) \leq 4 \sum_{i}\mu(R_i) \leq \frac{4 \left\| f \right\|_{L^p}}{\lambda^p}.
		\end{equation}

	\begin{definition}
		Given $f: \mathcal{V} \rightarrow \mathbb{C}$ and a Calder\'on--Zygmund set $\tilde{Q}$, we define the maximal function $M_{\tilde{Q}}$ as follows
		\[
		M_{\tilde{Q}}(f)(x) = 
		\sup_{R \subset \tilde{Q} \, : \, x \in R}\mu(R)^{-1} \int_{R} |f| \, d\mu \qquad \forall x \in \tilde{Q} \,,
		\]
		where the supremum is taken over all admissible trapezoids $R$ containing $x$ and contained in $\tilde{Q}$. When $x \notin \tilde{Q}$ we set $	M_{\tilde{Q}}(f)(x) = 0$.
	\end{definition}
	
	Consider a function $f \in L^p(\mu)$ with support contained in a Calder\'on--Zygmund set $\tilde{Q}$, and let $\lambda >0$.
	We define  
	\[
	\Omega_{\tilde{Q},\lambda^p} = \lbrace x \in \mathcal{V} : M_{\tilde{Q}}(|f|^p)(x) > \lambda^p \rbrace \,.
	\]
	Arguing as before, we can show that there exists a family of pairwise disjoint admissible trapezoids $\lbrace R_i \rbrace$ such that $R_i \subset \tilde{Q}$, $\sum_{i} \mu(R_i) \leq \frac{1}{\lambda^p} \left\| f \right\|_{L^p}$ and $\Omega_{\tilde{Q},\lambda^p} \subset \bigcup_i \tilde{R_i} $.

	\section{Hardy spaces}\label{Hardy}
In this section we define atomic Hardy spaces replacing balls with \CZ sets in the classical definition of atoms.

\begin{definition}
A function $a$ is a {\emph{$(1,p)$-atom}}, for $ p\in(1, \infty]$, 
if it satisfies the following properties:
\begin{enumerate}
\item[(i)] \,$a$ is supported in a \CZ set $\tilde R$;
\item[(ii)] \,\,$\|a\|_{L^p}\leq \mu (\tilde R)^{1/p-1}\,;$ 
\item[(iii)] \,\,$\int_{\mathcal V} a\di\mu =0$\,.
\end{enumerate}
\end{definition}
Observe that a $(1,p)$-atom is in $L^1(\mu)$ and it is normalized 
in such a way that its $L^1$-norm does not exceed $1$. 

\begin{definition}
The Hardy space $H^{1,p}(\mu)$ is the space of all functions $h$ in $ L^1(\mu)$ 
such that $h=\sum_j \lambda_j\, a_j$, where $a_j$ are $(1,p)$-atoms and $\lambda _j$ 
are complex numbers such that $\sum _j |\lambda _j|<\infty$. We denote by $\|h\|_{H^{1,p}}$ 
the infimum of $\sum_j|\lambda_j|$ over all decompositions $h=\sum_j\lambda_j\,a_j$, 
where $a_j$ are $(1,p)$-atoms. 
\end{definition}
The space $H^{1,p}(\mu)$ endowed with the norm $\|\cdot\|_{H^{1,p}}$ is a Banach space.

\subsection{Equivalence of spaces $H^{1,p}(\mu)$ for $p\in (1,\infty]$}

It easily follows from the above definitions that
$H^{1,\infty}(\mu)\subseteq H^{1,p}(\mu)$,
whenever $p\in (1, \infty)$. Actually we shall prove that $H^{1,\infty}(\mu)= H^{1,p}(\mu)$,
for every $p\in (1, \infty)$. To show this fact we first prove a preliminary result.

 \begin{proposition}\label{atomo}
Let $a$ be a $(1,p)$-atom, where $p\in (1,\infty)$. Then $a$ is in $ H^{1,\infty}(\mu)$ and there exists a positive constant $C_{p}$, which depends only on $p$, such that
$$\|a\|_{H^{1,\infty}}\leq C_{p}\,.$$
\end{proposition}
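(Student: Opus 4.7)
The plan is to decompose $a$ using a Calder\'on--Zygmund-type procedure applied to $|a|^p$ at geometrically spaced levels, based on the local maximal function $M_{\tilde{Q}}$ of Section~\ref{SecMaxFunction}. Let $\tilde{Q}$ be the Calder\'on--Zygmund set supporting $a$ and set $\alpha:=\mu(\tilde{Q})^{-1/p}\|a\|_{L^p}$; the $(1,p)$-normalization gives $\alpha\,\mu(\tilde{Q})\le 1$ and $\alpha\le\mu(\tilde{Q})^{-1}$, so $\alpha$ is of the natural $L^\infty$-order of a $(1,\infty)$-atom on $\tilde{Q}$. A crucial observation I will exploit is that since singletons are admissible trapezoids (the degenerate case of Definition~\ref{DefAdmissibleTrapezoid}), one has $|a|^p\le M_{\tilde{Q}}(|a|^p)$ pointwise on $\tilde{Q}$, and hence $|a|\le 2^k\alpha$ off the level set $\Omega_k:=\{x\in\tilde{Q}:M_{\tilde{Q}}(|a|^p)(x)>(2^k\alpha)^p\}$.

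For each $k\ge 0$ the covering constructed in Section~\ref{SecMaxFunction} furnishes pairwise disjoint admissible trapezoids $\{R_i^k\}_i\subset\tilde{Q}$ with $\Omega_k\subset\bigcup_i\tilde{R_i^k}$ and $\sum_i\mu(R_i^k)\le 2^{-kp}\mu(\tilde{Q})$. Disjointifying envelopes within each level via $E_i^k:=\tilde{R_i^k}\setminus\bigcup_{j<i}\tilde{R_j^k}$, I localize $a$ to the shells $F_i^k:=E_i^k\cap(\Omega_k\setminus\Omega_{k+1})$ and form the candidate pieces
\[
b_{k,i}:=a\,\chi_{F_i^k}-c_{k,i}\,\chi_{\tilde{R_i^k}},\qquad c_{k,i}:=\frac{1}{\mu(\tilde{R_i^k})}\int_{F_i^k}a\,d\mu.
\]
Each $b_{k,i}$ is supported in the Calder\'on--Zygmund set $\tilde{R_i^k}$, has vanishing integral by construction, and satisfies $\|b_{k,i}\|_\infty\le 2^{k+2}\alpha$ (using $|a|\le 2^{k+1}\alpha$ on $F_i^k$ and the crude bound $|c_{k,i}|\le 2^{k+1}\alpha$). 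Hence $b_{k,i}=\lambda_{k,i}\,\tilde{b}_{k,i}$ for a genuine $(1,\infty)$-atom $\tilde{b}_{k,i}$ and $\lambda_{k,i}\le 2^{k+2}\alpha\,\mu(\tilde{R_i^k})$. Using Proposition~\ref{mutildeR} and $p>1$,
\[
\sum_{k\ge 0}\sum_i\lambda_{k,i}\;\le\;16\,\alpha\,\mu(\tilde{Q})\sum_{k\ge 0}2^{k(1-p)}\;\le\;C_p,
\]
since $\alpha\,\mu(\tilde{Q})\le 1$ and $p>1$.

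The identity $a=a\,\chi_{\Omega_0^c}+\sum_{k,i}b_{k,i}+\sum_{k,i}c_{k,i}\,\chi_{\tilde{R_i^k}}$ leaves the residual $G:=a\,\chi_{\Omega_0^c}+\sum_{k,i}c_{k,i}\,\chi_{\tilde{R_i^k}}$, which has $\int G\,d\mu=0$ (by $\int a\,d\mu=0$). Handling $G$ is the main obstacle. I plan either to verify a uniform bound $|G|\lesssim\alpha\le\mu(\tilde{Q})^{-1}$ on $\tilde{Q}$, which would make $G$ itself a scalar multiple of a $(1,\infty)$-atom on $\tilde{Q}$, or to iterate the construction on $G$ and show the total coefficient sum still converges geometrically. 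The delicate point is the pointwise control of $\sum_{k,i}|c_{k,i}|\chi_{\tilde{R_i^k}}$ on $\tilde{Q}$, for which Proposition~\ref{inclusion} (preventing pathological nesting of envelopes at different scales) together with the geometric decay $\sum_i\mu(\tilde{R_i^k})\lesssim 2^{-kp}\mu(\tilde{Q})$ should combine to bound, at each point of $\tilde{Q}$, both the number and the size of envelopes that contribute.
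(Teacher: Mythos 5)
Your overall strategy---running a Calder\'on--Zygmund decomposition of $|a|^p$ with $M_{\tilde Q}$ at geometrically spaced thresholds and re-packaging pieces of $a$ as $(1,\infty)$-atoms on the envelopes---is in the same spirit as the paper's, and your bookkeeping for the explicit atom part is essentially correct: with $\alpha = \mu(\tilde Q)^{-1/p}\|a\|_{L^p}$ one does get $\sum_i\mu(R_i^k)\le 2^{-kp}\mu(\tilde Q)$, hence $\sum_{k,i}\lambda_{k,i}\le 16\,\alpha\mu(\tilde Q)\sum_k 2^{k(1-p)}\le C_p$ since $p>1$ and $\alpha\mu(\tilde Q)\le 1$. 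The structural difference is that you perform the decomposition simultaneously over all dyadic levels $2^k\alpha$, whereas the paper's proof is iterative: at stage $n$ it decomposes only the current remainder $f_{j_n}$ at the single threshold $\alpha^{n+1}$, extracts one layer of $(1,\infty)$-atoms, passes the new remainder $\sum_{j_{n+1}}f_{j_{n+1}}$ to the next stage, and concludes by showing $\|F_n\|_{L^1}\to 0$. It never forms the global residual that your construction produces.

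That residual is where your argument has a genuine gap. For
$G=a\chi_{\Omega_0^c}+\sum_{k,i}c_{k,i}\chi_{\tilde{R_i^k}}$
your option (1), a uniform bound $|G|\lesssim\alpha$, is not supported by anything you have established. Each $c_{k,i}$ is only bounded by $2^{k+1}\alpha$, so at a point $x$ lying in $\Omega_K\setminus\Omega_{K+1}$ the contributions from the levels $k\le K$ for which $x\in\tilde{R_i^k}$ can a priori accumulate to order $2^K\alpha$, not $O(\alpha)$. Moreover the envelopes $\tilde{R_i^k}$ within a single level $k$ are not pairwise disjoint (only the base trapezoids $R_i^k$ are), and you have not shown a bounded-overlap property for them; Proposition~\ref{inclusion} says that one trapezoid that meets another of larger width sits inside the latter's envelope, which is a nesting lemma for the \emph{selection} step and not an overlap bound for the selected envelopes, let alone across different $k$. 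Nor does your construction build in any cancellation among the signed $c_{k,i}$ that would rescue an $L^\infty$ bound. So the ``delicate point'' you flag is in fact the whole difficulty, and it is not resolved. Your option (2)---iterate the construction on $G$---is the correct instinct, but once carried out it essentially collapses to the paper's inductive scheme, where the stage-$n$ remainder is kept as an explicit family $\{f_{j_n}\}$ with the quantitative controls $|f_{j_n}|\le|b|+C_p\,n\,\alpha^n$, $\|f_{j_n}\|_{L^p}^p\lesssim\alpha^{np}\mu(\tilde R_{j_n})$, and $\sum_{j_n}\mu(\tilde R_{j_n})$ geometrically small, instead of being declared a single atom.
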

\begin{proof}
Let $a$ be a $(1,p)$-atom supported in a \CZ set $\tilde Q$. We define $b:=\mu(\tilde Q)\,a$. 

Let $\alpha$ be a positive number such that $\alpha>   2[24 q (1+4^p)]^{1/(p-1)}$. 
 
We  shall prove that for all $n\in \NN$ there exist functions $a_{j_{\ell}}$, $h_{j_n}$ and admissible sets $\tilde R_{j_{\ell}}$, $j_{\ell}\in \NN ^{\ell}$, $\ell=0,...,n$, such that
\begin{equation}\label{b}
b=\sum_{\ell=0}^{n-1} 4(6q)^{1/p}\alpha^{\ell +1}\sum _{j_{\ell}}\mu(\tilde R_{j_{\ell}})\,\,a_{j_{\ell}}+\sum _{j_n\in\mathbb N^n}f_{j_n}\,,
\end{equation}
where the following properties are satisfied:
\begin{enumerate}
\item[(i)] $a_{\jl}$ is a $(1,\infty)$-atom supported in the \CZ set $\tilde R_{\jl}$;
\item[(ii)] $f_{j_n}$ is supported in $R_{j_n}$ and $\int f_{j_n}\di\mu=0$;
\item[(iii)] $\Big(\frac{1}{\mu(\tilde R_{j_n})}\int_{\tilde R_{j_n}}|f_{j_n}|^p\di\mu\Big)^{1/p}\leq  2^{1-1/p}\,(6q)^{1/p}\,(1+4^p)^{1/p}\alpha^n$;
\item[(iv)] $|f_{j_n}(x)|\leq |b(x)|+ 4(6q)^{1/p}n\,\alpha ^n \,, \qquad \forall x\in \tilde R_{j_n}$;
\item[(v)] $\sum_{j_n}\mu(\tilde R_{j_n})\leq \,   4^{n+1}\,[  2^{p-1}\,(6q)\,(1+4^p) ]^n\alpha ^{-np}\,\|b\|^p_{L^p}$.
\end{enumerate}
 
We first suppose that the decomposition (\ref{b}) exists and we show that $a\in H^{1,\infty}(\mu)$. Set $F_n=\sum_{j_n}f_{j_n}$. We prove that $F_n\in L^1(\mu)$ and that its $L^1$-norm tends to zero when $n$ tends to $\infty$. Indeed, by H\"older's inequality
\begin{align*}
\|F_n\|_{L^1}&\leq\sum_{j_n\in\mathbb N^n}\|f_{j_n}\|_{L^1}\leq\sum_{j_n\in\mathbb N^n}\mu(\tilde R_{j_n})^{1/{p'}}\,\|f_{j_n}\|_{L^p}\,,
\end{align*}
where $p'$ is the conjugate exponent of $p$. Now by (iii) and (v) we have that
$$
\begin{aligned}
\|F_n\|_{L^1}&\leq\sum_{j_n\in\mathbb N^n}\mu(\tilde R_{j_n})^{1/{p'}}\mu(\tilde R_{j_n})^{1/p}\,  2^{1-1/p}\,(6q)^{1/p}\,(1+4^p)^{1/p}\alpha^n\\
&\leq   4^{n+1}\,[  2^{p-1}\,6q\,(1+4^p) ]^n\alpha ^{-np}\,\|b\|^p_{L^p} \,2^{1-1/p}\,(6q)^{1/p}\,(1+4^p)^{1/p}\alpha^n   \,, \\
\end{aligned}
$$
which tends to zero when $n$ tends to $\infty$, since $\alpha>   2[24 q (1+4^p)]^{1/(p-1)}$. 

This shows that the series $\sum_{\ell=0}^{n-1} 4(6q)^{1/p}\alpha^{\ell +1}\sum _{j_{\ell}}\mu(\tilde R_{j_{\ell}})\,\,a_{j_{\ell}}$ converges to $b$ in $L^1(\mu)$. Moreover by (v) we deduce that
\begin{align*}
\sum_{\ell=0}^{\infty}   4(6q)^{1/p}\alpha^{\ell +1}\sum _{j_{\ell}}\mu(\tilde R_{j_{\ell}}) &\leq \sum_{\ell=0}^{\infty}   4(6q)^{1/p}\alpha^{\ell +1} 4^{\ell+1}\,[  2^{p-1}\,(6q)\,(1+4^p) ]^{\ell}\alpha ^{-\ell p}\,\|b\|^p_{L^p} \\
&\leq C_p\,\|b\|^p_{L^p} \leq C_p\,\mu(\tilde Q)\,,
\end{align*}
where $C_{p}$ depends only on $p$.

It follows that $b$ is in $H^{1,\,\infty}(\mu)$ and $\|b\|_{H^{1,\infty}}\leq C_{p}\,\mu(\tilde Q)\,.$ Thus $a=\mu(\tilde Q)^{-1}\,b$ is in  $H^{1,\,\infty}(\mu)$ and $\|a\|_{H^{1,\infty}}\leq C_{p}$, as required. 

It remains to prove that the decomposition (\ref{b}) exists. We prove it by induction on $n$. 

{\bf{Step~$n=1$.}} Define
$$
\Omega_{\tilde Q,\alpha^p}=\{x\in\mathcal V: M_{\tilde Q}(|b|^p)(x)>\alpha^p\}\,.
$$
If $\Omega=\emptyset$, then 
$$
\mu(\{x_R\})^{-1}|b(x_R)|^p\mu(\{x_R\})\leq \alpha^p \,, \qquad \forall x_R\in \mathcal V\,.
$$
It follows that $\|b\|_{L^{\infty}}\leq \alpha$ and we have
$
b=\alpha\mu(\tilde Q)a_{j_0},
$
so that and \eqref{b} is satisfied with the $(1,\infty)$-atom $a_{j_0}=\alpha^{-1}\mu(\tilde Q)^{-1}b$ and $f_{j_1}=0$ for every $j_1\in\mathbb N$.

If $\Omega\neq \emptyset$, then we construct a family of trapezoids $R_{i}$, $i\in\mathbb N$, and the corresponding Calder\'on--Zygmund sets $\tilde R_{i}$, $i\in\mathbb N$, as in Section \ref{SecMaxFunction}. We then define $U_i=\tilde R_i\setminus (\cup_{j<i}\tilde R_j)$, and $h_i=b\chi_{U_i}$. 
One can show as in \cite[p.43]{HS} 
\begin{equation}\label{mediah_i^p}
\int_{U_i}|h_i|^pd\mu\leq 6q \alpha^p \mu(\tilde R_i)\,.
\end{equation}
See also \cite[p.74]{Ar} for a detailed proof of the previous inequality.  

We now define
$$
f_i=h_i- \mu(R_i)^{-1}\int h_i d\mu\,\chi_{R_i} \,, \qquad g=b-\sum_if_i\,.
$$
Notice that $f_i$ is supported in $\tilde Q\cap \tilde R_i$ and $g$ is supported in $\tilde Q$. The average of $f_i$ vanishes by construction. Moreover, for every $x\in\tilde R_i$ we have that 
$$
\begin{aligned}
|f_i(x)|&\leq |h_i(x)|+\mu(R_i)^{-1}\int |h_i |d\mu\chi_{R_i}\\
&\leq |h_i(x)|+\mu(R_i)^{-1} \Big(\int |h_i |^pd\mu\Big)^{1/p}\mu(\tilde R_i)^{1/{p'}}\\
&\leq |b(x)|+(6q)^{1/p} 4\,\alpha\,, 
\end{aligned}
$$
where we have applied \eqref{mediah_i^p} and Proposition \ref{mutildeR}. It follows that 
$$
\|f_i\|_{L^p}^p\leq 2^{p-1}\|h_i\|_{L^p}^p+6q4^p\alpha^p\mu(\tilde R_i)\leq 2^{p-1}6q \alpha^p(1+4^p)\mu(\tilde R_i)\,.
$$
Moreover,
$$
\sum_i\mu(\tilde R_i)\leq 4\sum_i\mu(R_i)\leq 4 \frac{1}{\alpha^p}\sum_i\int_{R_i}|b(x)|^pd\mu(x)\leq 4 \frac{1}{\alpha^p}\|b\|_{L^p}^p\,.
$$
This implies that 
$$
\sum_i\|f_i\|_{L^p}^p\leq 2^{p-1}\,6q \alpha^p(1+4^p)4 \frac{1}{\alpha^p}\|b\|_{L^p}^p\,.
$$
We now estimate the function $g$. If $x$ is a vertex in the complement of $\tilde Q$, then obviously $g(x)=0$. If $x$ is a vertex in $\tilde Q\cap (\bigcup_i \tilde R_i)^c$, then $g(x)=b(x)$ and $M_{\tilde Q}(|b|^p)(x)\leq \alpha ^p$. Thus $|g(x)|=|b(x)|\leq \alpha$\,. Let us now take $x\in \tilde Q\cap (\bigcup_i \tilde R_i)$ and let $i_0$ be the unique index such that $x\in U_{i_0}$. We distinguish two different cases. If $x\notin \bigcup_i R_i$, then $f_{i_0}(x)=h_{i_0}(x)=b(x)$ and $f_i(x)=0$ for every $i\neq i_0$, so that $g(x)=b(x)-b(x)=0$. If $x\in \bigcup_i R_i$, let $i_1$ be the unique index such that $x\in R_{i_1}$. When $i_1=i_0$ we have that $f_i(x)=0$ for every $i\neq i_0$ and 
$$
g(x)=b(x)-f_{i_0}(x)=b(x)-b(x)+\mu(R_{i_0})^{-1}\int h_{i_0}d\mu\,,
$$
so that $|g(x)|\leq (6q)^{1/p}4\alpha$. When $i_1\neq i_0$ we have that $f_i(x)=0$ for every $i\neq i_0,i_1$ and 
$$
g(x)=b(x)-f_{i_0}(x)-f_{i_1}(x)=b(x)-b(x)+\mu(R_{i_1})^{-1}\int h_{i_1}d\mu\,,
$$
so that $|g(x)|\leq (6q)^{1/p}4\alpha$. 

In conclusion, 
$$
b(x)=(6q)^{1/p}4\alpha \mu(\tilde Q) a(x)+\sum_{j_1\in\mathbb N}f_{j_1}(x)\,,
$$
where $a=(6q)^{-1/p}4^{-1}\alpha^{-1} \mu(\tilde Q)^{-1}g$ is a $(1,\infty)$-atom supported in $\tilde Q$ and all properties (i)-(v) are satisfied.

{\bf{Inductive step}}. Suppose that a decomposition 
$$
b=\sum_{\ell=0}^{n-1} 4(6q)^{1/p}\alpha^{\ell +1}\sum _{j_{\ell}}\mu(\tilde R_{j_{\ell}})\,\,a_{j_{\ell}}+\sum _{j_n\in\mathbb N^n}f_{j_n}\,,
$$
holds, where properties (i)-(v) are satisfied. We shall prove that a similar decomposition of $b$ holds with $n+1$ in place of $n$. To do so, we decompose each function $f_{j_n}$ by following the same construction applied above to $b$ with respect to $\alpha^{n+1}$. We omit the details.   
 \end{proof}
 The following theorem is now an easy consequence of Proposition \ref{atomo}. 
 \begin{theorem}\label{coincidono}
For any $p\in (1,\infty)$, $H^{1,p}(\mu)=H^{1,\infty}(\mu)$ and the norms $\|\cdot\|_{H^{1,p}}$ 
and $\|\cdot\|_{H^{1,\infty}}$ are equivalent. 
\end{theorem}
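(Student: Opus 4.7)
The plan is to deduce the theorem as a short consequence of Proposition \ref{atomo}, which supplies the nontrivial direction. There are two inclusions to establish, with matching norm bounds.

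First I would dispose of the easy inclusion $H^{1,\infty}(\mu)\subseteq H^{1,p}(\mu)$. If $a$ is a $(1,\infty)$-atom supported in a \CZ set $\tilde R$, then $\|a\|_{L^\infty}\leq\mu(\tilde R)^{-1}$, and H\"older's inequality gives
\[
\|a\|_{L^p}\leq \mu(\tilde R)^{1/p}\,\|a\|_{L^\infty}\leq \mu(\tilde R)^{1/p-1},
\]
so every $(1,\infty)$-atom is automatically a $(1,p)$-atom. Any atomic decomposition of $h$ in terms of $(1,\infty)$-atoms is therefore admissible as a $(1,p)$-atomic decomposition, whence $\|h\|_{H^{1,p}}\leq \|h\|_{H^{1,\infty}}$.

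For the reverse inclusion $H^{1,p}(\mu)\subseteq H^{1,\infty}(\mu)$, I would take $h\in H^{1,p}(\mu)$ and a decomposition $h=\sum_j\lambda_j\,a_j$ in $L^1(\mu)$ with $(1,p)$-atoms $a_j$ and $\sum_j|\lambda_j|<\infty$. Proposition \ref{atomo} provides, for each $j$, an expansion $a_j=\sum_k \mu_{j,k}\,b_{j,k}$ into $(1,\infty)$-atoms with $\sum_k|\mu_{j,k}|\leq C_p$. Substituting and collecting terms formally yields
\[
h=\sum_{j,k}\lambda_j\mu_{j,k}\,b_{j,k},\qquad \sum_{j,k}|\lambda_j\mu_{j,k}|\leq C_p\sum_j|\lambda_j|<\infty.
\]
To make this rigorous I would use that $H^{1,\infty}(\mu)$ is a Banach space: each $\lambda_j a_j$ belongs to $H^{1,\infty}(\mu)$ with norm at most $C_p|\lambda_j|$, hence $\sum_j\lambda_j a_j$ converges absolutely in $H^{1,\infty}(\mu)$, and since the continuous embedding $H^{1,\infty}(\mu)\hookrightarrow L^1(\mu)$ forces the $H^{1,\infty}$-limit to coincide with the $L^1$-limit $h$, we obtain $h\in H^{1,\infty}(\mu)$ and
\[
\|h\|_{H^{1,\infty}}\leq C_p\sum_j|\lambda_j|.
\]
Taking the infimum over $(1,p)$-atomic decompositions of $h$ gives $\|h\|_{H^{1,\infty}}\leq C_p\|h\|_{H^{1,p}}$.

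Combining the two inequalities gives the equivalence of norms, and hence the equality of the spaces. The only delicate point is the identification of the $L^1$ and $H^{1,\infty}$ limits in the second step, which is a standard consequence of the boundedness of the inclusion $H^{1,\infty}(\mu)\hookrightarrow L^1(\mu)$ built into the normalization of atoms; no further analysis is needed once Proposition \ref{atomo} is available.
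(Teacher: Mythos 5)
Your argument is correct and is precisely the route the paper has in mind: the paper already notes the easy inclusion $H^{1,\infty}(\mu)\subseteq H^{1,p}(\mu)$ and then states that the theorem is ``an easy consequence of Proposition~\ref{atomo},'' which is exactly what you carry out in detail (re-expanding each $(1,p)$-atom into $(1,\infty)$-atoms via the proposition, using completeness of $H^{1,\infty}(\mu)$ and the continuous embedding into $L^1(\mu)$ to justify the rearrangement). No gap; this matches the paper's proof in substance.
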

In the sequel we denote by $H^1(\mu)$ the space $H^{1,\infty}(\mu)$ and we define $\|\cdot\|_{H^1}=\|\cdot\|_{H^{1,\infty}}$.

\subsection{Real interpolation properties of $H^1(\mu)$}

In this subsection we study the real interpolation of $H^1(\mu)$ and the $L^p(\mu)$ spaces. We first recall some notation of the real interpolation of normed spaces, focusing on the $K$-method. For the details see \cite{BL}. 

Given two compatible normed spaces $A_0$ and $A_1$, for any $t>0$ and for any $a\in A_0+A_1$ we define
$$K(t,a;A_0,A_1)=\inf\{ \|a_0\|_{A_0}+t\|a_1\|_{A_1}:~a=a_0+a_1,\,a_i\in A_i \}\,.$$ 
Take $q\in [1, \infty]$ and $\theta\in (0,1)$. The {\emph{real interpolation space}} $\big[A_0,A_1\big]_{\theta,q}$ is defined as the set of the elements $a\in A_0+A_1$ such that
$$\|a\|_{\theta,q}=\begin{cases}
\Big(\int_0^{\infty}\big[t^{-\theta}\,K(t,a;A_0,A_1)\big]^q \frac{\di t}{t} \Big)^{1/q}&{\rm{if~}} 1\leq q<\infty\\
\|t^{-\theta}\,K(t,a;A_0,A_1)\|_{\infty}&{\rm{if~}} q=\infty\,,
\end{cases}
$$
is finite. The space $\big[A_0,A_1\big]_{\theta,q}$ endowed with the norm $\|a\|_{\theta,q}$ is an exact interpolation space of exponent $\theta$. 

We refer the reader to \cite{Jo} for an overview of the real interpolation results which hold in the classical setting. Our aim is to prove the same results in our context. Note that in our case a maximal characterization of $H^1(\mu)$ is not avalaible, so that we cannot follow the classical proofs but we shall only use the atomic definition of $H^1(\mu)$ to prove the results.

\smallskip
We shall first estimate the $K$ functional of $L^{p}$-functions with respect to the couple of spaces $(H^1(\mu),L^{p_1}(\mu))$, $1<p_1\leq \infty$.
\begin{lemma}\label{intpinfty}
	Suppose that $1<p< p_1\leq \infty$ and let $\theta\in (0,1)$ be such that $\frac{1}{p}=1-\theta+\frac{\theta}{p_1}$. Let $f$ be in $L^p(\mu)$. The following hold:
	\begin{enumerate}
		\item[(i)] for every $\lambda>0$ there exists a decomposition $f=g^{\lambda}+b^{\lambda}$ in $L^{p_1}(\mu)+H^1(\mu)$ such that
		\begin{enumerate}
			\item[(i')] $\|g^{\lambda}\|_{L^\infty} \leq C\,\lambda$ and, if $p_1<\infty$, then $\|g^{\lambda}\|_{L^p_1}^{p_1}\leq C\,\lambda^{p_1-p}\,\|f\|_{L^p}^p$;
			\item[(i'')] $\|b^{\lambda}\|_{H^1}\leq C\,\lambda^{1-p}\,\|f\|_{L^p}^p\,;  $
		\end{enumerate}
		\item[(ii)] for any $t>0$, $K(t,f;H^1(\mu),L^{p_1}(\mu))\leq C\,t^{\theta}\,\|f\|_{L^p};$
		\item[(iii)] $f\in  [H^1(\mu),L^{p_1}(\mu)]_{\theta,\infty}$ and $\|f\|_{\theta,\infty}\leq C\,\|f\|_{L^p}.$ 
	\end{enumerate}
\end{lemma}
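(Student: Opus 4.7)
The plan is to prove (i) by a Calder\'on--Zygmund type decomposition of $f$ at level $\lambda$ built from the maximal function $M$ introduced in Section \ref{SecMaxFunction}, then deduce (ii) by optimizing the splitting in $\lambda$ (as a function of $t$), and finally get (iii) for free from the definition of the $K$-norm.

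For (i), apply the construction of Section \ref{SecMaxFunction} to $|f|^p$ at level $\lambda^p$: this produces pairwise disjoint admissible trapezoids $\{R_i\}$ with $\int_{R_i}|f|^p\di\mu\geq \lambda^p\mu(R_i)$, $\sum_i\mu(R_i)\leq \lambda^{-p}\|f\|_{L^p}^p$, and $\Omega_{\lambda^p}\subset E:=\bigcup_i\tilde R_i$. Set $U_i=\tilde R_i\setminus\bigcup_{j<i}\tilde R_j$, $h_i=f\chi_{U_i}$, $f_i=h_i-\mu(R_i)^{-1}\big(\int h_i\di\mu\big)\chi_{R_i}$, and define
\begin{equation*}
b^\lambda=\sum_i f_i\,,\qquad g^\lambda=f-b^\lambda=f\chi_{E^c}+\sum_i \mu(R_i)^{-1}\Big(\int h_i\di\mu\Big)\chi_{R_i}\,.
\end{equation*}
Exactly as in the proof of Proposition \ref{atomo}, one shows $\int_{U_i}|h_i|^p\di\mu\leq 6q\lambda^p\mu(\tilde R_i)$, from which H\"older's inequality and Proposition \ref{mutildeR} give $\|f_i\|_{L^p}^p\leq C\lambda^p\mu(\tilde R_i)$; hence each $f_i$ is a scalar multiple of a $(1,p)$-atom with coefficient bounded by $C\lambda\mu(\tilde R_i)$, so by Theorem \ref{coincidono},
\begin{equation*}
\|b^\lambda\|_{H^1}\leq C\,\lambda\sum_i\mu(\tilde R_i)\leq C\,\lambda\cdot 4\sum_i\mu(R_i)\leq C\,\lambda^{1-p}\,\|f\|_{L^p}^p\,,
\end{equation*}
which is (i''). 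For the $L^\infty$ bound in (i'), on $E^c$ every single vertex is an admissible trapezoid, so $|f(x)|^p=M(|f|^p)(x)\cdot \mu(\{x\})/\mu(\{x\})\leq \lambda^p$, while on each $R_i$ the value of $g^\lambda$ is a local average controlled by H\"older and $\mu(\tilde R_i)\leq 4\mu(R_i)$, giving $|g^\lambda|\leq C\lambda$. For the $L^{p_1}$ bound when $p_1<\infty$, Jensen's inequality gives $\int_{R_i}|g^\lambda|^p\di\mu\leq 4^{p-1}\int_{U_i}|f|^p\di\mu$, so $\|g^\lambda\|_{L^p}\leq C\|f\|_{L^p}$, and interpolating pointwise between this $L^p$ bound and $\|g^\lambda\|_{L^\infty}\leq C\lambda$ yields $\|g^\lambda\|_{L^{p_1}}^{p_1}\leq C\lambda^{p_1-p}\|f\|_{L^p}^p$.

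For (ii), combining (i'') and (i') with the definition of $K$, for any $\lambda>0$
\begin{equation*}
K(t,f;H^1(\mu),L^{p_1}(\mu))\leq \|b^\lambda\|_{H^1}+t\,\|g^\lambda\|_{L^{p_1}}\leq C\bigl[\lambda^{1-p}\|f\|_{L^p}^p+t\,\lambda^{(p_1-p)/p_1}\|f\|_{L^p}^{p/p_1}\bigr]
\end{equation*}
(with the obvious reading $\lambda^{(p_1-p)/p_1}\|f\|_{L^p}^{p/p_1}=\lambda$ when $p_1=\infty$). Choosing $\lambda=t^{-p_1/[p(p_1-1)]}\|f\|_{L^p}$ balances the two terms, and using the relation $\theta=(p-1)p_1/[p(p_1-1)]$ coming from the hypothesis $\tfrac{1}{p}=1-\theta+\tfrac{\theta}{p_1}$, both contributions become $Ct^\theta\|f\|_{L^p}$, yielding (ii). Finally (iii) is immediate: $\|f\|_{\theta,\infty}=\sup_{t>0}t^{-\theta}K(t,f;H^1(\mu),L^{p_1}(\mu))\leq C\|f\|_{L^p}$.

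The core difficulty is really confined to (i): one must verify that the building blocks $f_i$ are genuine $(1,p)$-atoms up to a uniform constant and that the $L^\infty$ bound on $g^\lambda$ survives on the ``bad'' part $\bigcup_i R_i$ (not only on $E^c$). Both points hinge on the fact that $\mu(\tilde R_i)\leq 4\mu(R_i)$ (Proposition \ref{mutildeR}) together with the H\"older estimate for the local averages, essentially the same computations already carried out in Proposition \ref{atomo}; once these are in place, the equivalence $H^{1,p}=H^{1,\infty}$ from Theorem \ref{coincidono} legitimates the use of $(1,p)$-atoms to estimate the $H^1$-norm of $b^\lambda$, and the optimization yielding (ii) and (iii) is routine.
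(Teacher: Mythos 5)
Your proposal is correct and follows essentially the same route as the paper: the same Calder\'on--Zygmund covering from Section \ref{SecMaxFunction}, the same definition of $h_i$, $f_i$, $b^\lambda$, $g^\lambda$, the bound $\int_{U_i}|h_i|^p\di\mu\leq 6q\lambda^p\mu(\tilde R_i)$, the control $\mu(\tilde R_i)\leq 4\mu(R_i)$, and the same optimization over $\lambda$. The only cosmetic differences are that you write $g^\lambda$ in closed form as $f\chi_{E^c}+\sum_i\mu(R_i)^{-1}(\int h_i\di\mu)\chi_{R_i}$ instead of the paper's case analysis in the indices $i_0,i_1$, and you obtain the $L^{p_1}$ bound by first proving $\|g^\lambda\|_{L^p}\leq C\|f\|_{L^p}$ and interpolating with the $L^\infty$ bound, whereas the paper splits the $L^{p_1}$ norm into pieces $I$ and $II$.
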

\begin{proof}
	Let $f$ be in $L^p(\mu)$.
	We first prove (i). Given a positive $\lambda$, let 
	$$\Omega_{\lambda^p}=\{x\in\mathcal V: M(|f|^p)(x)>\lambda^p\}\,.
	$$
	Let $\{R_i\}$ be the collection of trapezoids constructed as in Section \ref{SecMaxFunction}. We now define $U_i=\tilde R_i\setminus( \cup_{j<i}\tilde R_j)$ and $h_i=f\chi_{U_i}$,  
	$$
	f_i=f-\mu(R_i)^{-1}\int h_id\mu \chi_{R_i}\,,\qquad   b^{\lambda}=\sum_if_i\qquad {\rm{and}}\qquad g^{\lambda}=f-b^{\lambda}\,.
	$$
	Arguing as we did in the proof of Proposition \ref{atomo} we can show that  
	$$\|g^{\lambda}\|_{L^{\infty}}\leq C\,\lambda\qquad{\rm{and}}\qquad \frac{1}{\mu(R_i)}\int_{R_i}|h_i|^pd\mu\leq C\,\lambda^p \,  .$$
	If $p_1<\infty$, then
	\begin{align*}
	\|g^{\lambda}\|_{L^{p_1}}^{p_1}&\leq \int_{(\bigcup \tilde R_i)^c}|f|^{p_1}d\mu+\sum_i\int_{\tilde R_i}|f-\sum_jf_j  |^{p_1}d\mu
	= I+II\,.
	 	\end{align*} 
	To estimate $I$ we notice that $(\cup_i\tilde R_i)^c\subset \Omega_{\lambda^p}^c$, so that
	$$
	I\leq  \int_{(\bigcup \tilde R_i)^c}|f|^{p_1-p}|f|^pd\mu\leq C\lambda ^{p_1-p}\|f\|_{L^p}^p\,.
	$$	
	To estimate $II$ we first observe that given $i$ and $x\in\tilde R_i$ there exists 	only two indeces $i_0\leq i$ and $i_1$ such that $x\in U_{i_0}$ and $x\in R_{i_1}$. If $i_0=i_1$, then 
	 $$
	 f(x)-\sum_jf_j(x)=f(x)-f_{i_0}(x)=\mu(R_{i_0})^{-1}\int h_{i_0}d\mu\,,
	 $$
so that $|f(x)-\sum_jf_j(x)|\leq C\lambda$. If $i_0\neq i_1$, then 
$$
f(x)-\sum_jf_j(x)=f(x)-f_{i_0}(x)-f_{i_1}(x)=\mu(R_{i_1})^{-1}\int h_{i_1}d\mu\,,
$$		
so that $|f(x)-\sum_jf_j(x)|\leq C\lambda$. It follows that 
$$
II\leq C\sum_i\int_{\tilde R_i}\lambda ^{p_1}d\mu\leq C\lambda ^{p_1}\sum_i\mu(\tilde R_i)\leq C\lambda^{p_1-p}\|f\|_{L^p}^p\,.
$$
In conclusion, $\|g^{\lambda}\|_{L^{p_1}}^{p_1}\leq C\lambda^{p_1-p}\|f\|_{L^p}^p$\,.

	We now prove that $b^{\lambda}$ is in $H^{1,p}(\mu)$. Indeed, for any $i$, $f_i$ is supported in $\tilde R_i$, has vanishing integral and
	$$
	\|f_i\|_{L^p}\leq C\|h_i\|_{L^p}+C\mu(R_i)^{-1} \mu(\tilde R_i)^{1/p}\int|h_i|d\mu\mu(\tilde R_i)^{1/p}\leq C\lambda \,.
	$$
	This shows that $f_i\in H^{1,p}(\mu)=H^1(\mu)$ and $\|f_i\|_{H^1}\leq C\,\lambda\,\mu(\tilde R_i)$. Since $b^{\lambda}=\sum_if_i$, $b^{\lambda}$ is in $H^{1}(\mu)$ and 
	$$\|b^{\lambda}\|_{H^{1}}\leq C\,\lambda\,\sum_i\mu(\tilde R_i)\leq C\,\lambda\,\frac{\|f\|_{L^p}^p}{\lambda^p}\,,$$
	as required.
	
	We now prove (ii). Fix $t>0$. For any positive $\lambda$, let $f=g^{\lambda}+b^{\lambda}$ be the decomposition of $f$ in $L^{p_1}(\mu)+H^1(\mu)$ given by (i). Thus 
	\begin{align*}
	K(t,f;H^1(\mu),L^{p_1}(\mu))&\leq \inf_{\lambda>0} \big( \|b^{\lambda}\|_{H^1}+t\,\|g^{\lambda}\|_{L^{p_1}} \big)\\
	&\leq C\,\inf_{\lambda>0}\big(\lambda^{1-p}\,\|f\|_{L^p}^{p}+t\,\lambda^{1-p/p_1}\|f\|_{L^p}^{p/p_1}  \big)\\
	&=C\,\|f\|_{L^p}^{p/{p_1}}\,\inf_{\lambda>0} G(t,\lambda)\,,
	\end{align*}
	where $G(t,\lambda)=\lambda^{1-p}\,\|f\|_{L^p}^{p(1-1/{p_1})}+t\,\lambda^{1-p/p_1}$. Since
	\begin{align*}
	\partial_{\lambda}G(t,\lambda)
	=\lambda^{-p} \big[(1-p)\,\|f\|_{L^p}^{p(1-1/{p_1})}+(1-p/p_1)t\,\lambda^{-p/p_1+p}  \big]\,,
	\end{align*}
	we have that if $p_1<\infty$, then
	$$\inf_{\lambda>0} G(t,\lambda)=G\big(t,C_p\|f\|_{L^p}\,t^{p_1/{p-pp_1}}\big)=C_p\,\|f\|_{L^p}^{1-p/p_1}\,t^{\frac{p_1(p-1)}{p(p_1-1)}}\,.$$
	If $p_1=\infty$, then
	$$\inf_{\lambda>0} G(t,\lambda)=G\big(t,C_p\|f\|_{L^p}\,t^{-1/p}\big)=C_p\,\|f\|_{L^p}\,t^{1-1/p}\,.$$
	It follows that
	$$K(t,f;H^1,L^{p_1})\leq C_p\,\|f\|_{L^p}\,t^{\theta}\,,$$
	proving (ii). This implies that $\|t^{-\theta}\,K(t,f;H^1(\mu),L^{p_1}(\mu))\|_{L^{\infty}}\leq C_p\,\|f\|_{L^p}$, so 
	that $f\in [H^1(\mu),L^{p_1}(\mu)]_{\theta,\infty}$ and $\|f\|_{\theta,\infty}\leq C_p\|f\|_{L^p}$, as required in (iii).
\end{proof}
Following closely the proof of \cite[Theorem ]{V} we deduce from Lemma \ref{intpinfty} the following result.
\begin{theorem}\label{realintH1Lp2}
	Let $1<p<p_1\leq \infty$ and $\theta\in(0,1)$ be such that $\frac{1}{p}=1-\theta+\frac{\theta}{p_1}$. Then 
	$$\big[H^1(\mu),L^{p_1}(\mu)  \big]_{\theta,p}=L^p(\mu)\,.$$
\end{theorem}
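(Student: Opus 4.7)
The plan is to establish the two inclusions in $[H^1(\mu), L^{p_1}(\mu)]_{\theta,p} = L^p(\mu)$ separately. The easier direction $[H^1(\mu), L^{p_1}(\mu)]_{\theta,p} \hookrightarrow L^p(\mu)$ is soft: each $(1,\infty)$-atom satisfies $\|a\|_{L^1}\leq 1$, so $H^1(\mu)\hookrightarrow L^1(\mu)$ continuously, and monotonicity of the real interpolation functor combined with the classical Marcinkiewicz identity $[L^1(\mu), L^{p_1}(\mu)]_{\theta, p} = L^p(\mu)$---which holds on any measure space---closes this half.

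For the opposite inclusion $L^p(\mu)\hookrightarrow [H^1(\mu), L^{p_1}(\mu)]_{\theta, p}$, the estimate $K(t, f; H^1, L^{p_1}) \leq C t^\theta \|f\|_{L^p}$ from Lemma \ref{intpinfty}(ii) is not sharp enough, as it only places $f$ in the strictly weaker space $[H^1, L^{p_1}]_{\theta, \infty}$. I would therefore refine the proof of Lemma \ref{intpinfty}(i) to bound $\|b^\lambda\|_{H^1}$ by a quantity sensitive to the distribution of $|f|$. Using that each trapezoid $R_i$ in the level-$\lambda$ \CZ cover satisfies $\lambda^p \mu(R_i)\leq \int_{R_i} |f|^p\di\mu$, a layer-cake split of the right-hand integral at height $\lambda/2$ with absorption yields
$$\sum_i \mu(\tilde R_i)\leq C\lambda^{-p}\int_{\{|f|>\lambda/2\}}|f|^p\di\mu,$$
and hence $\|b^\lambda\|_{H^1}\leq C\lambda^{1-p}\int_{\{|f|>\lambda/2\}}|f|^p\di\mu$, which replaces the crude bound $C\lambda^{1-p}\|f\|_{L^p}^p$ by a tail integral that is decreasing in $\lambda$.

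With this refinement in hand, the scheme of \cite{V} applies: insert the improved pair of bounds into $K(t,f;H^1,L^{p_1}) \leq \|b^{\lambda(t)}\|_{H^1} + t\|g^{\lambda(t)}\|_{L^{p_1}}$, choose $\lambda(t)$ so that the two terms balance (essentially a dyadic bookkeeping in $\lambda$), and compute
$$\int_0^\infty\bigl[t^{-\theta}K(t,f;H^1,L^{p_1})\bigr]^p\,\frac{\di t}{t}$$
by changing variables $t \leftrightarrow \lambda$ and applying Fubini--Tonelli; a Hardy-type inequality then collapses the resulting single integral to $C\|f\|_{L^p}^p$. The main obstacle is the sharpening step itself: the crude $H^1$-bound of Lemma \ref{intpinfty}(i'') uses $\|f\|_{L^p}^p$ globally and, integrated against $t^{-\theta p-1}\di t$, yields a logarithmically divergent integral, so it is essential to trace through the \CZ cover and replace $\|f\|_{L^p}^p$ by the tail $\int_{\{|f|>\lambda/2\}}|f|^p\di\mu$ in order for the Hardy inequality to close the argument.
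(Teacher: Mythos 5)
Your easy inclusion $[H^1(\mu),L^{p_1}(\mu)]_{\theta,p}\hookrightarrow L^p(\mu)$ is correct: atoms have $L^1$-norm at most one, so $H^1(\mu)\hookrightarrow L^1(\mu)$, and classical Marcinkiewicz interpolation does the rest. Your diagnosis of the hard direction is also correct: Lemma~\ref{intpinfty}(ii) only places $L^p$ inside $[H^1,L^{p_1}]_{\theta,\infty}$, and the crude bound $\|b^\lambda\|_{H^1}\leq C\lambda^{1-p}\|f\|_{L^p}^p$ is off by a logarithm. However, your proposed sharpening does not close the gap. The layer-cake estimate $\sum_i\mu(\tilde R_i)\leq C\lambda^{-p}\int_{\{|f|>\lambda/2\}}|f|^p\di\mu$ is itself valid (absorb $(\lambda/2)^p\mu(R_i)$ into $\lambda^p\mu(R_i)$ on each of the pairwise disjoint $R_i$), but after balancing the two terms and changing variables $t\leftrightarrow\lambda$, the quantity you must control reduces to
\[
p\int_0^\infty\frac{1}{\lambda}\Bigl(\int_{\{|f|>\lambda/2\}}|f|^p\di\mu\Bigr)\di\lambda\;+\;\|f\|_{L^p}^p,
\]
and by Fubini--Tonelli the first integral equals $p\int_{\mathcal V}|f(x)|^p\bigl(\int_0^{2|f(x)|}\lambda^{-1}\di\lambda\bigr)\di\mu(x)$, which is $+\infty$ for any nonzero $f$. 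The divergence sits at $\lambda\to0$, where the tail $\int_{\{|f|>\lambda/2\}}|f|^p\di\mu$ saturates to $\|f\|_{L^p}^p$ and gives no improvement; equivalently, the balance forces $\lambda(t)\sim\big(\|f\|_{L^p}^p/t\big)^{1/p}$ as $t\to\infty$ and $\int^\infty\lambda(t)^p\di t$ still diverges logarithmically. No Hardy inequality can rescue an already-divergent single integral, so the tail refinement alone is not enough; what one would actually need is an estimate of the strength $K(t,f;H^1,L^{p_1})\lesssim K(t,f;L^1,L^{p_1})$, and that is precisely the sharpened control that the Calder\'on--Zygmund construction underlying Lemma~\ref{intpinfty} does not provide.

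The paper's route, following~\cite{V}, avoids all of this and uses only what Lemma~\ref{intpinfty} actually proves. Choose $1<p_0<p<p_2<p_1$ and $\theta_i\in(0,1)$ with $1/p_i=1-\theta_i+\theta_i/p_1$. Part~(iii) of the Lemma gives $L^{p_i}(\mu)\hookrightarrow[H^1(\mu),L^{p_1}(\mu)]_{\theta_i,\infty}$, while the embedding $H^1\hookrightarrow L^1$ you already used gives $[H^1(\mu),L^{p_1}(\mu)]_{\theta_i,1}\hookrightarrow[L^1(\mu),L^{p_1}(\mu)]_{\theta_i,1}\hookrightarrow L^{p_i}(\mu)$. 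Hence each $L^{p_i}(\mu)$ is an intermediate space of class $C_{\theta_i}$ for the couple $(H^1(\mu),L^{p_1}(\mu))$, and the reiteration theorem of~\cite{BL} yields $[H^1(\mu),L^{p_1}(\mu)]_{\theta,p}=[L^{p_0}(\mu),L^{p_2}(\mu)]_{\eta,p}=L^p(\mu)$ with $\theta=(1-\eta)\theta_0+\eta\theta_2$. This requires nothing beyond the $(\theta,\infty)$ endpoint and sidesteps any refinement of the pointwise $K$-functional bound.
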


 \subsection{Boundedness of singular integrals on $H^1(\mu)$}\label{SubSecSingInt}
In this subsection we prove that integral operators whose kernels satisfy a suitable integral H\"ormander condition are bounded from $H^1(\mu)$ to $L^1(\mu)$. 

\begin{theorem}\label{TeolimH1L1}
Let $T$ be a linear operator which is bounded on $L^2(\mu)$ and admits a locally integrable kernel $K$ off the diagonal that satisfies the condition 
\begin{align}\label{stimaH}
\sup_{\tilde R}\sup_{y,\,z\in \tilde R}   \int_{(\tilde R^*)^c}|K(x,y)-K(x,z)| \,d\mu (x) &<\infty\,,
\end{align} 
where the supremum is taken over alla Calder\'on-Zygmund sets $\tilde R$ and $\tilde R^*$ is defined as in Definition \ref{tildeQ*}. Then $T$ extends to a bounded operator from $H^{1}(\mu)$ to $L^1(\mu)$.

\end{theorem}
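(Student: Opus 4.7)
The plan is to reduce everything to a uniform $L^1$ estimate on atoms and then use the atomic decomposition of $H^1(\mu)$. By Theorem \ref{coincidono}, we may work with $(1,2)$-atoms. The key claim is that there exists $C>0$ such that $\|Ta\|_{L^1}\leq C$ for every $(1,2)$-atom $a$. Once this is established, for any $h=\sum_j\lambda_j a_j\in H^1(\mu)$ with $(1,2)$-atoms $a_j$, the series $\sum_j\lambda_j Ta_j$ converges absolutely in $L^1(\mu)$ since $\sum_j|\lambda_j|<\infty$; setting $Th$ equal to this sum gives a well-defined extension (well-definedness can be checked by comparing it on $L^2\cap H^1$ with the original $L^2$-bounded operator) with $\|Th\|_{L^1}\leq C\|h\|_{H^1}$.

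To prove the uniform estimate, let $a$ be a $(1,2)$-atom supported in a Calder\'on--Zygmund set $\tilde R$, and split
\[
\|Ta\|_{L^1(\mu)}=\int_{\tilde R^*}|Ta|\di\mu+\int_{(\tilde R^*)^c}|Ta|\di\mu.
\]
For the \textbf{local part} I would apply the Cauchy--Schwarz inequality, the $L^2$-bound on $T$, the size condition $\|a\|_{L^2}\leq \mu(\tilde R)^{-1/2}$, and the estimate $\mu(\tilde R^*)\leq C\mu(\tilde R)$ from \eqref{mutildeQ*}:
\[
\int_{\tilde R^*}|Ta|\di\mu\leq \mu(\tilde R^*)^{1/2}\|Ta\|_{L^2}\leq C\,\mu(\tilde R^*)^{1/2}\mu(\tilde R)^{-1/2}\leq C.
\]

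For the \textbf{far part} I would exploit the cancellation $\int a\di\mu=0$. Fixing any reference point $z\in\tilde R$, for $x\in(\tilde R^*)^c$ the kernel representation gives
\[
Ta(x)=\int_{\tilde R}K(x,y)a(y)\di\mu(y)=\int_{\tilde R}\bigl[K(x,y)-K(x,z)\bigr]a(y)\di\mu(y),
\]
and then Fubini together with the integral H\"ormander condition \eqref{stimaH} yields
\[
\int_{(\tilde R^*)^c}|Ta(x)|\di\mu(x)\leq \int_{\tilde R}|a(y)|\int_{(\tilde R^*)^c}|K(x,y)-K(x,z)|\di\mu(x)\di\mu(y)\leq C\|a\|_{L^1}\leq C,
\]
since $(1,2)$-atoms have $L^1$-norm at most $1$. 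Combining both pieces gives $\|Ta\|_{L^1}\leq C$ with $C$ independent of $a$.

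The argument is the standard singular integral scheme, so the main points to be careful about are (i) verifying that the extension of $T$ from atoms to all of $H^1(\mu)$ is well-defined, which I would handle by using a suitable dense subspace (e.g.\ finite atomic sums or $H^1(\mu)\cap L^2(\mu)$) on which both definitions agree, and (ii) making sure that the kernel representation of $Ta$ is valid pointwise off the support of $a$ (standard, since $a\in L^2$ with compact support and $K$ is locally integrable off the diagonal). Neither step presents a serious obstacle; the real work has already been done in setting up the geometric objects ($\tilde R^*$, the bound \eqref{mutildeQ*}) that make the H\"ormander condition the right tool in this nondoubling setting.
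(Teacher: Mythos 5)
Your proposal is correct and follows essentially the same route as the paper: split $\|Ta\|_{L^1}$ over $\tilde R^*$ and its complement, estimate the local part by Cauchy--Schwarz, the $L^2$-bound on $T$, the atom's size condition, and $\mu(\tilde R^*)\le C\mu(\tilde R)$, and estimate the far part by the vanishing integral of $a$ together with Fubini and the integral H\"ormander condition \eqref{stimaH}. The only cosmetic difference is that you work with $(1,2)$-atoms and the paper uses $(1,\infty)$-atoms (both legitimate via Theorem \ref{coincidono}, and the displayed chain of inequalities is identical); the paper also invokes the weak type $(1,1)$ bound from \cite[Theorem 1.2]{HS} as a convenient device to legitimize the extension from atoms to all of $H^1(\mu)$, which plays the same role as the density/consistency argument you sketch.
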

\begin{proof}
Using (\ref{stimaH}), by \cite[Theorem 1.2]{HS} it is easy to prove that the operator $T$ is of weak type $(1,1)$. Then it is enough to show that there exists a constant $C$ such that $\|Ta\|_{L^1}\leq C$ for any $(1,\infty)$-atom $a$. 

Let $a$ be a $(1,\infty)$-atom supported in the \CZ set $\tilde R$. Recall that $\tilde R\subset {B(x_R, 8h(\tilde R))}$, and $\tilde R^*$ denote the dilated set $\{x\in \mathcal V:~d(x,\tilde R)<h(\tilde R)/4\}$. We estimate the integral $\int_{\mathcal V} |Ta|d\mu$.

We first estimate the integral on $\tilde R^*$ by the Cauchy-Schwarz inequality and the size estimate of the atom:
\begin{align}\label{suR^*}
\int_{R^*} |Ta|d\mu&\leq \|Ta\|_{L^2}\,\mu(\tilde R^*)^{1/2}\nonumber \leq C \, \opnorm T\opnorm_{L^2\rightarrow L^2}\,\|a\|_{L^2}\,\mu(\tilde R)^{1/2}\nonumber\\
&\leq C\,\opnorm T\opnorm_{L^2\rightarrow L^2}\,.
\end{align}
We consider the integral on the complementary set of $\tilde R^*$ by using the fact that $a$ has vanishing integral: 
$$
\begin{aligned}
\int_{\tilde R^{*c}} |Ta|d\mu&\leq \int_{(\tilde R^{*})^c}\Big|\int_{\tilde R} K(x,y)\,a(y)d\mu(y)  \Big|d\mu(x)\\
&=
\int_{(\tilde R^{*})^c}\Big|\int_{\tilde R} [K(x,y)-K(x,x_R)]\,a(y)d\mu(y)  \Big|d\mu(x)\\
&\leq \int_{(\tilde R^{*})^c}\int_{\tilde R} |K(x,y)-K(x,x_R)|\,|a(y)|d\mu(y)d\mu(x)\\
&=\int_{\tilde R}|a(y)|\Big( \int_{(\tilde R^{*})^c} |K(x,y)-K(x,x_R)|d\mu(x) \Big)d\mu(y)\\
&\leq  \|a\|_{L^1}\,\sup_{y\in \tilde R}\int_{(\tilde R^{*})^c}|K(x,y)-K(x,x_R)|d\mu(x)\\
&\leq  C\,,
\end{aligned}
$$
as required.
\end{proof}
{\bf{Remark}}: The previous result applies to singular integral operators associated with the Laplacian $\mathcal L$ on the tree defined for every function $f:\mathcal V\rightarrow \mathbb C$ by  
\begin{equation}
\mathcal Lf(x)=	f(x)-
 \frac{1}{2\sqrt{q}} \sum_{y \in \mathcal{V} : d(x,y) = 1} q^{\frac{\ell(y) - \ell(x)}{2}} f(y) \qquad \forall x \in \mathcal{V}\,.
	\end{equation}
The Laplacian $\mathcal L$ is bounded on $L^p(\mu)$ for every $p\in [1,\infty]$, it is self-adjoint on $L^2(\mu)$ and its spectrum on $L^2(\mu)$ is $[0,2]$. 
Suppose that $M:\mathbb R\rightarrow \mathbb C$ is bounded supported in $[0,2)$ and satisfies the following Mikhlin-H\"ormander condition of order $s>3/2$ 
	\begin{equation} \label{DefMikhlinHormander}
	\sup_{t>0} \| (D_tM) \phi \|_{W^s_2} < \infty\,,
	\end{equation}
	for some $\phi \in C^\infty_c ([\tfrac{1}{2}, 4])$, $\phi \neq 0$, where $(D_tM)(\lambda)=M(t\lambda)$ and $W^s_2$ denotes the Sobolev space of order $s$ modelled on $L^2([0,2])$. Then the operator $M(\mathcal L)$ extends to a bounded operator from $H^1(\mu)$ to $L^1(\mu)$. Indeed, it was shown in \cite[Theorem 2.3]{HS} that the integral kernel of the operator $H(\mathcal L)$ satisfies condition \eqref{stimaH}. 
For every function $f  :\mathcal V\rightarrow \mathbb C $ we also define the gradient $\triangledown f$ by the formula:
		\begin{equation}
		(\nabla f)(x) = \sum_{y \in \mathcal{V} : d(x,y) = 1} \left| f(y) - f(x) \right| \qquad \forall x \in \mathcal{V}\,.
		\end{equation}
Then the first order Riesz transform $\nabla \mathcal L^{-1/2}$ extends to a bounded operator from $H^1(\mu)$ to $L^1(\mu)$. Indeed, it was shown in \cite[Theorem 2.3]{HS} that the integral kernel of this operator satisfies condition \eqref{stimaH}.

\end{document}